\documentclass[11pt]{article}

\usepackage{amssymb, amsmath, amsthm, graphicx}
\usepackage[left=1in,top=1in,right=1in]{geometry}
\date{}

\theoremstyle{plain}
      \newtheorem{theorem}{Theorem}[section]
      \newtheorem{lemma}[theorem]{Lemma}

      \newtheorem{observation}[theorem]{Observation}
      
      \newtheorem{proposition}[theorem]{Proposition}
      \newtheorem{conjecture}[theorem]{Conjecture}
\theoremstyle{definition}

\theoremstyle{remark}

\def\conv{\mbox{\rm conv}}

\title{On the maximum number of $k$-holes in point sets with no $(k + 1)$-hole}

\begin{document}

\author{Andrew Suk\thanks{Department of Mathematics, University of California at San Diego, La Jolla, CA, 92093 USA. Supported by NSF grant DMS-2246847. Email: {\tt asuk@ucsd.edu}.}  \and Su Zhou\thanks{Department of Mathematics, University of California at San Diego, La Jolla, CA, 92093 USA. Supported by NSF grant DMS-2246847. Email: {\tt suzhou@ucsd.edu}.} }

\maketitle
\begin{abstract}
The classical problem of Erd\H{o}s asks for the minimum number of empty convex \(k\)-gons determined by an \(n\)-element point set in the plane. The celebrated empty hexagon theorem, proved independently by Gerken and Nicol\'as, shows that every sufficiently large planar point set contains a 6-hole, while Horton’s famous construction shows the existence of arbitrarily large point sets with no 7-hole. In this paper, we initiate the study of the maximum number of \(k\)-holes in planar point sets with no \((k+1)\)-hole. More precisely, for each fixed \(k\ge 6\), let \(h_k(n)\) be the maximum number of \(k\)-holes determined by a planar point set in general position, of size at most \(n\), and with no \((k+1)\)-hole. We prove that there are absolute constants $c_1,c_2>0$ such that
\[
\left(\frac{c_1}{k}\right)^{\lfloor k/3\rfloor} n^{\lfloor k/3\rfloor}
\le h_k(n)\le
\left(\frac{c_2}{k}\right)^{\lceil k/2\rceil} n^{\lceil k/2\rceil}.
\]
\end{abstract}
 
\section{Introduction}

A finite point set in the plane is in \emph{convex position} if it is the vertex set of a convex polygon, and is in \emph{general position} if no three points are collinear. Let \(ES(k)\) denote the minimum integer such that every planar point set in general position with at least \(ES(k)\) points contains \(k\) points in convex position. Classical theorems of Erd\H{o}s and Szekeres \cite{ES35,es2} show that
\[
2^{k-2}+1\le ES(k)\le \binom{2k-4}{k-2}+1,
\]
and they famously conjectured that the lower bound is sharp. After a long sequence of improvements \cite{CG,KP,TV1,TV2}, the first author \cite{S} proved that this conjecture is asymptotically correct by establishing the bound \(ES(k)=2^{k+o(k)}\). The current best bound on the lower-order term is due to Holmsen, Mojarrad, Pach, and Tardos \cite{holm}, who showed that $
ES(k)\le 2^{k+O(\sqrt{k\log k})}.$

A variant of the Erd\H{o}s--Szekeres problem concerns empty convex polygons. Given a finite point set \(P\), a \(k\)-element subset \(X\subset P\) is called a \emph{\(k\)-hole} if the points of \(X\) are in convex position and the interior of \(\conv(X)\) contains no point of \(P\). A celebrated construction of Horton~\cite{H} shows that arbitrarily large planar point sets in general position may contain no \(7\)-hole, while the empty hexagon theorem of Gerken~\cite{Gerken} and independently Nicol\'as~\cite{Nicolas} shows that every sufficiently large planar point set in general position contains a \(6\)-hole.

For each fixed integer $k \ge 3$, let $f_k(n)$ denote the minimum number of
$k$-holes determined by an $n$-element point set in the plane in general
position. Hence, Horton's construction implies that $f_k(n)=0$ for $k \ge 7$.
For $k \le 6$, estimating the asymptotic behavior of $f_k(n)$ has attracted
considerable attention. It is known that $f_3(n)=\Theta(n^2)$ and
$f_4(n)=\Theta(n^2)$, see, e.g., \cite{BV,Ai} for the best known constants.
The recursive construction of Horton determines only \(\Theta(n^2)\)
\(5\)-holes and \(\Theta(n^2)\) \(6\)-holes (see~\cite{BV}). It is conjectured that these
quadratic upper bounds are tight, that is,
\[
        f_5(n)=\Theta(n^2)
        \qquad\text{and}\qquad
        f_6(n)=\Theta(n^2).
\]
For many years, the best known lower bound for \(f_5(n)\) was linear.
Aichholzer et al.~\cite{AiB} established the first superlinear bound, proving
that \(f_5(n)=\Omega(n(\log n)^{4/5})\). Very recently,
Astudillo-Marb\'an and Sol\'e-Pi~\cite{AMS} announced a proof of the improved
bound \(f_5(n)=\Omega(n^{20/11})\). For \(k=6\), no superlinear lower bound is known.  The empty hexagon theorem
of Gerken~\cite{Gerken} and Nicol\'as~\cite{Nicolas} implies only a linear
lower bound.

Without any additional restriction, the maximum number of \(k\)-holes determined
by a point set of size at most \(n\) is clearly \(\binom{n}{k}\), since every
\(k\)-subset of the vertices of a convex \(n\)-gon forms a \(k\)-hole.  We
consider the following Tur\'an-type problem: how many \(k\)-holes can a point
set of size at most \(n\) determine if it contains no \((k+1)\)-hole?  

For each fixed integer \(k\ge 6\), let \(h_k(n)\) be the maximum number of
\(k\)-holes determined by a planar point set in general position, of size at
most \(n\), and containing no \((k+1)\)-hole.  In other words, any point set of size at most \(n\) with more than \(h_k(n)\)
\(k\)-holes guarantees the existence of a \((k+1)\)-hole. Note that $h_k(n)$ is clearly monotone by definition. We prove the following.

\begin{theorem}\label{hole1}
There are absolute constants $c_1,c_2>0$ such that the following holds. For every integer \(n>k\ge 6\),
\[
\left(\frac{c_1}{k}\right)^{\lfloor k/3\rfloor} n^{\lfloor k/3\rfloor}
\le h_k(n)\le
\left(\frac{c_2}{k}\right)^{\lceil k/2\rceil} n^{\lceil k/2\rceil}.
\]
\end{theorem}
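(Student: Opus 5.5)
We prove the two bounds separately: a construction for the lower bound and a counting argument for the upper bound.

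\textbf{Lower bound.} Set $m=\lfloor k/3\rfloor$ and aim for a set of size at most $n$ with no $(k+1)$-hole but at least $(c_1 n/k)^m$ $k$-holes.

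The plan is to take a fixed Horton-type gadget $G$ of $O(k)$ points and attach $m$ ``clusters,'' each holding about $n/(3m)\approx n/k$ points. The clusters sit on small convex arcs near $m$ prescribed vertices of a convex polygon. The positions should be arranged so that every $k$-hole uses exactly three points (or one point, with the rest filled by gadget points) from each cluster. Choosing one point from each cluster independently should then yield a $k$-hole together with a fixed frame of $k-m$ auxiliary points. That gives $(n/(3m))^m\ge (c_1n/k)^m$ holes.

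Concretely, each cluster is placed on a very flat arc whose points are all visible in the same way from the rest of the configuration. Any empty convex polygon then picks at most one point per cluster: two points of a flat cluster make the polygon non-empty or non-convex. This is the ``3'' in $\lfloor k/3\rfloor$. Each cluster sits in a pocket bounded by two frame points, so a hole through one cluster point uses that point and its two neighbouring frame points, i.e.\ at most three vertices per pocket. With $m$ pockets, a hole therefore has at most $3m\le k$ vertices. The few extra gadget vertices needed to reach exactly $k$ are placed so that they cannot be exceeded, giving no $(k+1)$-hole.

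The interior of the frame must be filled with a Horton set to kill large holes elsewhere. The main risk is verifying that no $(k+1)$-hole arises through unexpected combinations of frame and interior points.

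\textbf{Upper bound.} Let $P$ be a set with no $(k+1)$-hole and let $H$ be a $k$-hole with vertices $v_1,\dots,v_k$ in cyclic order. We encode $H$ by a subset of $\lceil k/2\rceil$ of its vertices, say $v_1,v_3,v_5,\dots$, and show the encoding is injective up to a factor $C^k$. The key claim is that, given the alternate vertices, each missing vertex $v_{2i}$ is determined. It must lie in the region $R_i$ beyond the edge $v_{2i-1}v_{2i+1}$ and inside the wedge formed by the extensions of the adjacent edges. Suppose $R_i$ contains two points of $P$ usable as $v_{2i}$; take the one farthest from the line $v_{2i-1}v_{2i+1}$. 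A point of $P$ inside the triangle $v_{2i-1}v_{2i}v_{2i+1}$ would make the region non-empty. Choosing instead the extreme point of $P\cap R_i$ (convex-hull vertex) shows the choice is canonical, so the empty-hole condition forces $v_{2i}$ to be the unique hull vertex of $P\cap R_i$ visible appropriately. If two candidates could be inserted at once, we would get a $(k+1)$-hole, which is forbidden. Hence each alternate subset extends to $O(1)^k$ $k$-holes, and the count is at most $C^k\binom{n}{\lceil k/2\rceil}\le (c_2n/k)^{\lceil k/2\rceil}$.

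The main obstacle in the upper bound is making ``determined'' rigorous. The extremal hull vertex of $P\cap R_i$ is the only possible $v_{2i}$. If two distinct holes $H,H'$ share the alternate vertices but differ at $v_{2i}$, then combining both candidate points yields either a non-empty triangle or a convex $(k+1)$-gon that is empty. Emptiness follows from both $H$ and $H'$ being empty.
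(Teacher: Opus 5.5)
Your proposal has genuine gaps in both halves.

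\textbf{Lower bound.} The scheme you describe cannot avoid $(k+1)$-holes, and the obstruction is structural. In your scheme every $k$-hole is a fixed frame plus one freely chosen point per cluster, with each cluster point $a$ sitting between two frame points $f,g$. Take two such holes $X,X'$ that differ only in the point chosen from one cluster, $a\in X$ versus $a'\in X'$. The two neighbours on each side of that slot, say $e,f$ and $g,h$, then coincide. So $a$ and $a'$ both lie beyond the line $fg$ and on the inner sides of the lines $ef$ and $gh$. Emptiness of the triangles $fag$ and $fa'g$ forces $f,a,a',g$ into convex position, so $X\cup\{a'\}$ is convex. Taking the point of $P$ in the added triangle nearest to the corresponding edge of $X$ then gives an empty convex $(k+1)$-gon. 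So a vertex whose two neighbours on each side are fixed cannot vary; this is the same rigidity that drives the upper bound.

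Two further problems: filling the frame's interior with a Horton set would destroy the intended holes, whose hulls contain that interior. Also, nothing in a ``flat arc'' prevents a hole from using two consecutive cluster points.

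The missing idea is to vary a whole triple, endpoints included. The paper places a tiny flat Horton set $H_i$ near each of the $m=\lfloor k/3\rfloor$ upper sides $s_i$ of a convex cap, plus $k \bmod 3$ points near the origin. It then shows that a cap $X\subset H_i$ together with an outside point forms a hole iff $X$ is an open cap with respect to $s_i$. Choosing one open $3$-cap per $H_i$ gives a $k$-hole, and each $H_i$ has $\Omega(n/m)$ open $3$-caps. Horton sets have no $7$-hole and no open $4$-cap, so a hole leaving $H_i$ uses at most $3$ of its points. That rules out $(k+1)$-holes, and it is where the $3$ in $\lfloor k/3\rfloor$ comes from.

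\textbf{Upper bound.} Encoding a hole by alternate vertices is the paper's support, and your count $C^k\binom{n}{\lceil k/2\rceil}\le (c_2n/k)^{\lceil k/2\rceil}$ would be fine. The problem is that the key claim is not proved.

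Whether a point $p$ beyond $v_{2i-1}v_{2i+1}$ (the paper's $T_i$) can serve as $v_{2i}$ depends on the unknown $v_{2i-2},v_{2i+2}$, through convexity at $v_{2i\pm1}$. So $v_{2i}$ is not determined locally: $T_i$ can contain many $p$ with $v_{2i-1}pv_{2i+1}$ empty, and the ``farthest'' or ``extreme hull vertex'' choice is not canonical, since such a point need not even have an empty triangle.

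Your combination step only works when $H$ and $H'$ agree at $v_{2i\pm2}$. If they differ at several even positions, $v'_{2i}$ may lie beyond the line $v_{2i-2}v_{2i-1}$ of $H$, and then $H\cup\{v'_{2i}\}$ is not convex. Even in the local case, emptiness does not follow from $H,H'$ being empty: $\conv(H\cup\{v'_{2i}\})$ contains a sliver outside $\conv(H)\cup\conv(H')$, and you need the nearest-point choice above.

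The paper handles this non-local interaction globally, as follows.
\begin{itemize}
\item It normalizes $x_1$ to be the leftmost vertex.
\item It cuts out of $T_i$ the subregions $S_i,R_i$ lying beyond the lines $x_{2i-2}x_{2i-1}$ and $x_{2i+1}x_{2i+2}$.
\item At a maximal index with $x_{2i}\neq x'_{2i}$, it shows $x'_{2i}\in S_i$; the case $R_i$ would put $x'_{2i+1}$ inside a triangle of $X'$.
\item This forces $x'_{2i-2}\neq x_{2i-2}$ with $x'_{2i-2}\in S_{i-1}$.
\item Propagating cyclically gives a contradiction: via the leftmost normalization for even $k$, and via the extra region $T_{(k+1)/2}$ for odd $k$.
\end{itemize}
This propagation is what your sketch lacks.
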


\medskip

We believe that the correct exponent of $n$ is closer to the lower bound in
Theorem~\ref{hole1} than to the upper bound.  This is already consistent with
the behaviors of the Horton sets and the grid Horton sets (see~\cite{BV}), which both contain no \(7\)-hole but determine
\(\Theta(n^2)\) \(k\)-holes for each \(k=3,4,5,6\).  We conjecture the
following.

\begin{conjecture}
For every fixed integer $k\ge 6$, we have
\[
h_k(n)=O_k\!\left(n^{\lceil k/3 \rceil}\right).
\]
\end{conjecture}

\section{Horton sets and preliminary lemmas}

We begin by recalling Horton's classical construction, which requires the following definitions.
Given a finite point set $P$ in the plane, we say that $P$ is in \emph{strongly general position} if $P$ is in general position and no two points in $P$ share the same $x$-coordinate.  Let $X$ be an $r$-element point set in the plane in general position. We say that $X$
forms an \emph{$r$-cup} (\emph{$r$-cap}) if $X$ is in convex position and its convex hull is bounded above (below) by a single edge.

Given an $r$-cup $X \subset P$, we say that $X$ is \emph{open} if no point of $P$ lies above the $r$-cup, that is, no point of $P$ lies strictly above some point of the lower chain of $\conv(X)$. Likewise, given an $r$-cap $X \subset P$, we say that $X$ is \emph{open} if no point of $P$ lies below the $r$-cap, that is, no point of $P$ lies below the upper chain of $\conv(X)$. We say that a finite point set $P$ is \emph{upper $r$-closed} (\emph{lower $r$-closed}) if it contains no open $r$-cups (respectively, no open $r$-caps).

Given two finite point sets $P$ and $Q$ in the plane, we say that $P$ lies \emph{high above} $Q$ (or $Q$ lies \emph{deep below} $P$) if $P$ lies above every line generated by two points in $Q$ and $Q$ lies below every line generated by two points in $P$.

Let $H=\{h_1,\ldots,h_n\}$ be an $n$-element point set in the plane in strongly general position, ordered with respect to increasing $x$-coordinates. We say that $H$ is a \emph{Horton set} if it can be obtained by finitely many applications of the following rules:
\begin{enumerate}
    \item Every empty set and every one-point set is a Horton set.
    \item If the sets $H^{(1)}=\{h_i\in H:\textnormal{\emph{i} is odd}\}$ and $H^{(2)}=\{h_i\in H:\textnormal{\emph{i} is even}\}$ are Horton sets, and $H^{(1)}$ lies deep below or high above $H^{(2)}$, then $H$ is a Horton set.
\end{enumerate}

\noindent In \cite{H}, Horton constructed the following Horton set $P_t$ of size $2^t$ that contains no 7-hole:
\[
P_t=\{(i,d_i):0\le i<2^t\},
\]
where
\[
d_i=\sum_{j=1}^t a_j(2^t+1)^{t-j},
\]
and $i=\sum_{j=1}^t a_j2^{j-1}$ is the binary expansion of $i$. Since the recursive definition of a Horton set is preserved under taking subsequences in increasing \(x\)-coordinate order, every such subsequence is again a Horton set.

\begin{observation}
For each integer $m\ge1$, there is a Horton set of size $m$ in the plane.
\end{observation}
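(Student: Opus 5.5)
The plan is to use the remark made just before the observation: the Horton property is preserved when we pass to subsequences ordered by $x$-coordinate. So it suffices to produce, for every $t$, one Horton set with $2^t$ points, and then to restrict it to a suitable sub-collection of its points.

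\textbf{Step 1: a Horton set of size $2^t$.} For $m\ge 1$, choose $t$ with $2^t\ge m$ and take Horton's set $P_t$. One can check that $P_t$ is a Horton set by induction on $t$. Split $P_t$ by the parity of $i$, which is the bit $a_1$.

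\begin{itemize}
\item The even-indexed points have $a_1=0$. After halving $i$, they form an affine copy of $P_{t-1}$: the $x$-coordinates are halved, and the $d$-values are built from the remaining bits in base $2^t+1$.
\item The odd-indexed points have $a_1=1$. They form the same kind of copy, shifted up by $(2^t+1)^{t-1}$.
\end{itemize}

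Note that the copies use base $2^t+1$ rather than $2^{t-1}+1$. So the induction hypothesis should be stated for the more general family
\[
d_i=\sum_{j=1}^{s} a_j B^{\,s-j}, \qquad B\ge 2^s+1 .
\]
The proof of the Horton property for this family is the same.

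The key step is to show that the shift by $B^{s-1}$ puts the odd class high above the even class. Within one class, $|d_i|<B^{s-1}$. Hence any line through two points of the same class has slope at most $B^{s-1}/1$ in absolute value, taken over horizontal gaps of at least $1$. Over the $x$-range $[0,2^s)$, such a line therefore stays within $2^s\cdot B^{s-1}/\ldots$ of that class.

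This crude bound is the delicate point, and I expect it to be the main obstacle. It has to be replaced by the standard digit-based comparison, as in Horton's original verification. The comparison uses $B>2^s$, which guarantees that the vertical offset $B^{s-1}$ dominates every lower-order digit combination, multiplied by the horizontal span ratio. The same check also gives strongly general position: no three points are collinear, and all $x$-coordinates are distinct.

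\textbf{Step 2: restriction to size $m$.} Take any $m$ points of $P_t$, for instance the first $m$ in $x$-order. By the hereditary property stated before the observation, these $m$ points form a Horton set. The cases $m=1$ (by rule 1) and $m=0$ are trivial.

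A fallback, if the explicit coordinates prove awkward, avoids Step 1's computation entirely. Build Horton sets directly by induction using rule 2. Given Horton sets $A$ of size $\lceil m/2\rceil$ and $B$ of size $\lfloor m/2\rfloor$, first apply a vertical squashing affine map $(x,y)\mapsto(x,\varepsilon y)$ to both. Then interleave their $x$-coordinates by replacing $x$-values with odd and even ranks through a monotone reparametrization; it is easiest to take both sets on integer $x$ and map $x\mapsto 2x$ and $x\mapsto 2x+1$. Finally, translate $B$ far upward.

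Horizontal affine stretching preserves Horton-ness, since the order and the above/below relations are preserved. A sufficiently small $\varepsilon$ together with a large translation makes $B$ high above $A$. Small perturbations then give strongly general position. This fallback avoids the computation in Step 1 and would be my first choice for the write-up.
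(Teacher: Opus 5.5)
Your Steps 1--2 are the paper's argument. The paper does not verify anything about $P_t$ itself: it cites Horton for the fact that $P_t$ is a Horton set, then restricts to $m$ points using the remark that $x$-ordered subsequences of Horton sets are Horton.

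Your fallback is a genuinely different route. It is strong induction on $m$, using only rule~2 and invariance under the affine maps $(x,y)\mapsto(2x,y)$ and $(x,y)\mapsto(2x+1,y+T)$. It is self-contained, needs neither Horton's digit computation nor any heredity statement, and produces every size directly.

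To make the fallback airtight, put ``the $x$-coordinates are exactly $0,1,\dots,m-1$'' into the induction hypothesis. The construction reproduces this invariant: the $\lceil m/2\rceil$ even values and the $\lfloor m/2\rfloor$ odd values fill $\{0,\dots,m-1\}$, with the first set occupying the odd positions. A general monotone reparametrization of $x$ would not preserve lines, so only these affine maps are allowed.

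The squashing and the perturbation are unnecessary. A large $T$ alone makes the shifted copy of $B$ high above $A$, because lines through two points of $A$ are bounded on $[0,m-1]$ and lines through two points of $B$ move up rigidly with $B$. Strict high-aboveness already rules out collinear triples meeting both classes, so strongly general position is automatic. Perturbing $x$ would in fact destroy the integer invariant.

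One correction to Step~2, which also applies to the paper's remark before the observation: it is not true that any $m$ points of a Horton set form a Horton set. Take $P_3=\{(0,0),(1,81),(2,9),(3,90),(4,1),(5,82),(6,10),(7,91)\}$ and its subset $\{(0,0),(1,81),(2,9),(4,1)\}$. The parity classes are $\{(0,0),(2,9)\}$ and $\{(1,81),(4,1)\}$. The line $y=9x/2$ through the first class has $(1,81)$ above it and $(4,1)$ below it, so neither class is high above the other.

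What is true is heredity for contiguous runs $h_a,\dots,h_b$. The two parity classes of such a run are runs inside $H^{(1)}$ and $H^{(2)}$ (in some order). The high-above/deep-below relation passes to subsets, so induction on $|H|$ applies. Your concrete choice of the first $m$ points is therefore fine, but it needs this justification rather than ``any $m$ points''.

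Finally, the obstacle you anticipate in Step~1 is not real. Within one class, the heights lie in an interval of length $D=(B^{s-1}-1)/(B-1)$, not merely below $B^{s-1}$, and the $x$-gaps are at least $2$. So a line through two same-class points moves by less than $D\,2^{s-1}$ over the whole range. Since $D(1+2^{s-1})<B^{s-1}$ once $B\ge 2^{s-1}+2$, the crude estimate already suffices.
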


We will also need the following basic lemmas concerning Horton sets.

\begin{lemma}[\cite{H}]
Every Horton set is both upper 4-closed and lower 4-closed, and contains no $7$-hole. 
\end{lemma}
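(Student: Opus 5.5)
We prove the lemma by induction on $|H|$, following Horton's original argument. Sets with at most three points are trivially upper and lower $4$-closed and contain no $7$-hole. For the inductive step, let $H$ be a Horton set with $|H|\ge 2$. Split it as $H=H^{(1)}\cup H^{(2)}$ (odd- and even-indexed points), where both parts are Horton sets, and assume without loss of generality that $H^{(1)}$ lies deep below $H^{(2)}$; the other case is symmetric under reflection in the $x$-axis, which swaps cups and caps. Two geometric facts about this configuration will be used throughout.
\begin{itemize}
\item[(a)] Any cup containing points from both parts has at most one point of $H^{(2)}$. Indeed, if $p\in H^{(1)}$ and $q,q'\in H^{(2)}$ lie in a cup, then $p$ lies below the line $qq'$. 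Convexity of the cup then forces the point order to be incompatible with $p$ lying outside the $x$-range of $q,q'$ in the wrong place. Equivalently, the points of $H^{(2)}$ in a mixed cup lie consecutively at one extreme.
\item[(b)] Symmetrically, any cap containing points of both parts has at most one point of $H^{(1)}$.
\end{itemize}
Since the $x$-coordinates interleave, between two consecutive points of $H^{(i)}$ there is always a point of the other part.

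\emph{Step 1: upper $4$-closedness.} Let $X$ be a $4$-cup in $H$.
\begin{itemize}
\item If $X\subset H^{(2)}$, induction gives a point of $H^{(2)}$ above $X$ relative to $H^{(2)}$, and that point is also above $X$ in $H$.
\item If $X\subset H^{(1)}$, pick two consecutive points of $X$. They are not consecutive in $H$, so some point of $H^{(2)}$ lies between them in $x$-order. That point lies above every line spanned by $H^{(1)}$, so it lies above the lower chain of $X$ there. Hence $X$ is not open.
\item If $X$ is mixed, by (a) it has exactly one point of $H^{(2)}$, sitting at an end, and at least three points of $H^{(1)}$. Between two consecutive $H^{(1)}$-points of $X$ there is an $H^{(2)}$-point, which lies above the segment joining them. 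So again $X$ is not open.
\end{itemize}
Lower $4$-closedness follows the same way: a cap inside $H^{(2)}$ has an $H^{(1)}$-point below it between consecutive vertices; caps inside $H^{(1)}$ use induction; mixed caps use (b).

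\emph{Step 2: no $7$-hole.} Let $X$ be a $7$-hole.
\begin{itemize}
\item If $X\subset H^{(1)}$ or $X\subset H^{(2)}$, then $X$ is empty in that part, contradicting induction.
\item Otherwise $\conv(X)$ is cut by a line $\ell$ separating $X\cap H^{(1)}$ (below) from $X\cap H^{(2)}$ (above). Such a line exists by the deep-below/high-above condition. The polygon boundary then crosses $\ell$ exactly twice.
\item It follows that $X\cap H^{(1)}$ is a cup (the lower chain part, plus possibly endpoints) that is open relative to the points of $H^{(1)}$: any $H^{(1)}$-point above it within the $x$-range would lie inside $\conv(X)$, because the $H^{(2)}$ side is high above. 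Symmetrically, $X\cap H^{(2)}$ is an open cap in $H^{(2)}$.
\item By Step 1 applied inductively, $|X\cap H^{(1)}|\le 3$ and $|X\cap H^{(2)}|\le 3$, so $|X|\le 6$, a contradiction.
\end{itemize}

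The main obstacle is the last step: justifying that a point of $H^{(1)}$ above the cup $X\cap H^{(1)}$ really lies in the interior of $\conv(X)$. This requires showing that the upper part of $\conv(X)$, which is spanned by the high-above $H^{(2)}$ points together with the two connecting edges, covers the whole $x$-range of the cup. I would handle it using the fact that $H^{(1)}$ lies below every line through two $H^{(2)}$ points, and that the connecting edges go up steeply enough relative to any $H^{(1)}$-line. A careful treatment of the endpoints of the cup's $x$-range (whether the leftmost or rightmost vertex of $X$ is in $H^{(1)}$ or $H^{(2)}$) is the delicate part.
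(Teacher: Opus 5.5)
Your facts (a) and (b) are false as stated. With $H^{(1)}$ deep below $H^{(2)}$, the triple $(h_2,h_3,h_4)$ is a cup containing two points of $H^{(2)}$, because $h_3$ lies below the line $h_2h_4$. These are exactly the open $3$-cups used in the proof of Lemma~\ref{lem:3cap}. Your convexity argument really shows two things about a mixed cup. First, each $H^{(1)}$-point lies between any two $H^{(2)}$-points in $x$-order, since $p$ is below the line $qq'$. Second, each $H^{(2)}$-point lies outside the $x$-range of any two $H^{(1)}$-points, since $q$ is above the line $pp'$. So a mixed cup has at most two $H^{(2)}$-points, one at each end, and its $H^{(1)}$-points are consecutive in the cup. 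A mixed $4$-cup therefore still has two consecutive $H^{(1)}$-points, and some $H^{(2)}$-point of $H$ lies above the segment between them. Step 1 thus survives once you replace ``at least three'' by ``at least two''. Correct (b) and the mixed caps in the same way.

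The substantive gap is Step 2, the heart of the lemma, which you leave as a sketch. The fact that the boundary of $\conv(X)$ crosses $\ell$ twice only makes $A=X\cap H^{(1)}$ a contiguous arc. It is a cup because each edge $aa'$ of the arc is an edge of $\conv(X)$ and the nonempty set $B=X\cap H^{(2)}$ lies strictly above the line $aa'$, so $aa'$ is a lower-hull edge.

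For the containment, you do not need to describe the upper boundary over the cup's $x$-range; check the point against each edge line of $\conv(X)$. Write $A=\{a_1,\ldots,a_t\}$ from left to right with $t\ge 4$. Let $b,b'\in B$ be the polygon neighbours of $a_1$ and $a_t$. Let $z\in H^{(1)}$ lie strictly above the chain of $A$ at some abscissa in $(x(a_1),x(a_t))$. Then:
\begin{itemize}
\item $z$ is above every edge line of $A$, since a convex chain is the maximum of its edge lines on its $x$-range.
\item $z$ is below every edge line of $B$ by the deep-below condition, and these are upper-hull edges.
\item For the connecting edge $a_1b$: $z$ is to the right of $a_1$ and above the line $a_1a_2$, while $b$ is above the line $a_1z$ by the high-above condition. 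So the direction from $a_1$ to $z$ lies strictly inside the interior angle of $\conv(X)$ at $a_1$, which puts $z$ on the inner side of the line $a_1b$. The edge $a_tb'$ is symmetric.
\end{itemize}
Hence $z$ is in the interior of $\conv(X)$. So $A$ is an open cup in $H^{(1)}$, and any four consecutive points of $A$ form an open $4$-cup in $H^{(1)}$, contradicting the induction hypothesis. The cap side for $B$ is symmetric. No case analysis on which part contains the extreme vertices of $X$ is needed. With these repairs your outline is Horton's standard proof; the paper itself only cites Horton and gives no proof.
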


\begin{lemma}\label{lem:3cap}
Every Horton set of size $n$ has at least $\left\lfloor \frac{n-2}{2}\right\rfloor$ open 3-caps, and at least $\left\lfloor \frac{n-2}{2}\right\rfloor$ open 3-cups.
\end{lemma}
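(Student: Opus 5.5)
Let $H=\{h_1,\dots,h_n\}$ be a Horton set, ordered by $x$-coordinate. We prove the bound by induction on $n$, using the recursive structure of Horton sets. By symmetry (reflect in the $x$-axis, which swaps caps and cups and swaps ``high above'' with ``deep below''), it suffices to treat 3-caps; for cups the same argument applies with the roles of the two halves exchanged. For $n\le 3$ the bound $\lfloor (n-2)/2\rfloor\le 0$ holds trivially. So assume $n\ge 4$. Write $H^{(1)}$ for the odd-indexed points and $H^{(2)}$ for the even-indexed points. Both are Horton sets, of sizes $\lceil n/2\rceil$ and $\lfloor n/2\rfloor$, and one lies deep below the other.

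Let $D\in\{H^{(1)},H^{(2)}\}$ be the half that lies deep below the other half $U$. The key claim is that every open 3-cap $\{a,b,c\}$ of $D$, taken with respect to $D$, is also an open 3-cap of $H$. Such a cap is still a 3-cap in $H$, so we only need that no point of $U$ lies below the upper chain, i.e. below segment $ab$ or $bc$ within the corresponding $x$-range. This holds because $U$ lies above every line through two points of $D$, in particular above the lines $ab$ and $bc$.

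We also obtain new caps from the interleaving. If $h_i,h_{i+1},h_{i+2}$ are consecutive points with $h_{i+1}\in U$ and $h_i,h_{i+2}\in D$, then $h_{i+1}$ lies above the line $h_ih_{i+2}$. Hence $\{h_i,h_{i+1},h_{i+2}\}$ is a 3-cap. Its upper chain consists of the segments $h_ih_{i+1}$ and $h_{i+1}h_{i+2}$. Since there are no points of $H$ strictly between consecutive $x$-coordinates, nothing lies below this chain in its $x$-range. So the cap is open. Such a triple exists for every interior point of $U$ whose two neighbours lie in $D$. By the parity interleaving this includes every point of $U$ except possibly the first and last, giving roughly $|U|-1$ or so such caps, at least $\lfloor (n-2)/2\rfloor$. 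This already finishes the proof directly, without recursion.

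The remaining work is the count. The points of $U$ are $h_{i+1}$ with $i+1$ of one fixed parity, $2\le i+1\le n-1$. If $U$ is the even-indexed half, there are $\lfloor (n-1)/2\rfloor\ge\lfloor (n-2)/2\rfloor$ such indices. If $U$ is the odd-indexed half, there are $\lceil (n-2)/2\rceil\ge \lfloor (n-2)/2\rfloor$. Distinct middle points give distinct triples. For open 3-cups we use the same consecutive-triple argument with the middle point taken from the deep-below half, which lies below the line through its two neighbours. The only thing to be careful about is the parity bookkeeping at the endpoints, which is the sole (mild) obstacle.
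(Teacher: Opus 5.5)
Your argument is correct and is essentially the paper's proof. Consecutive triples $h_i,h_{i+1},h_{i+2}$ whose middle point lies in the high-above half are open 3-caps, and those whose middle point lies in the deep-below half are open 3-cups. The induction scaffolding and the claim about open caps of $D$ are unnecessary, as you noticed yourself. One small arithmetic slip: when $U$ is the odd-indexed half, the admissible middle indices are the odd $j$ with $3\le j\le n-1$, and there are exactly $\lfloor (n-2)/2\rfloor$ of them, not $\lceil (n-2)/2\rceil$. This still gives the required bound, so the conclusion is unaffected.
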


\begin{proof}
    If $n \leq 2$, then the statement is trivial.  Assume that $n\geq 3$, and let $H  = \{h_1,\ldots, h_n\}$.  Set $H^{(1)} = \{h_i \in H: \textnormal{\emph{i} is odd}\}$ and $H^{(2)} = \{h_i \in H: \textnormal{\emph{i} is even}\}$.  Without loss of generality, we can assume that $H^{(1)}$ lies deep below $H^{(2)}$, since a symmetric argument would follow otherwise. For each odd integer \(i\le n-2\), the triple
\((h_i,h_{i+1},h_{i+2})\)
forms an open 3-cap, since \(H^{(1)}\) lies deep below \(H^{(2)}\).
Similarly, for each even integer \(i\le n-2\), the triple \((h_i,h_{i+1},h_{i+2})\) forms an open 3-cup.
This gives at least
\(\lfloor (n-2)/2\rfloor\)
open 3-caps and the same number of open 3-cups.
\end{proof}

We say that $H$ is a \emph{Horton set with respect to a segment $s$} if, after rotating the plane so that $s$ becomes horizontal, the image of $H$ is a Horton set.  Likewise, we say that $X \subset H$ is an \emph{open cap (cup) with respect to segment $s$} if, after properly rotating the plane so that $s$ becomes horizontal, the image of $X$ is an open cap (cup) in $H$. We are now ready to prove Theorem \ref{hole1}.

\section{Upper and lower bounds for $h_k(n)$}
\begin{proof}[Proof of Theorem \ref{hole1}] We start by proving the lower bound for $h_k(n)$.   Set \(m=\lfloor k/3\rfloor\), and consider the convex polygon \(C\) with \(m+1\) vertices forming a cap. More specifically, let the vertices of \(C\) be
\[
\left(\cos\left(\frac{\pi i}{m}\right),\sin\left(\frac{\pi i}{m}\right)\right)\in\mathbb{R}^2,
\]
for \(i=0,\ldots,m\). Thus, the upper boundary of \(C\) consists of \(m\) sides \(s_1,\ldots,s_m\), ordered from left to right.  Along each side \(s_i\), near its midpoint, we place a sufficiently small and flat copy \(H_i\) of a Horton set as follows.  Since there exist integers \(q\ge 0\) and \(0\le r<m\) such that
\[
n-(k\bmod 3)=q m+r
\]
where $(k\bmod 3)$ denotes the remainder of $k$ when divided by $3$, we let \(H_1,\dots,H_r\) be Horton sets of size \(q+1\), and \(H_{r+1},\dots,H_{m}\) be Horton sets of size \(q\). Then
\[
\sum_{i=1}^{m}|H_i|
=
r(q+1)+\left(m-r\right)q
=
qm+r
=
n-(k\bmod 3).
\]
Each such $H_i$ will be chosen so that \(H_i\) is a Horton set with respect to \(s_i\). Moreover, the points of each \(H_i\) are ordered by increasing \(x\)-coordinate, and this ordering is preserved after rotating the plane so that \(s_i\) becomes horizontal.

We set $P =  H_1\cup \cdots \cup H_m$, which lies above the $x$-axis. Let $P_0$ be a set of $k\bmod 3$ points placed on the $x$-axis very close to the origin and set $P^{\ast} = P_0\cup P$. By applying small perturbations, we may assume that $P^{\ast}$ is in general position. Hence $|P^{\ast}|=n$, and by choosing each copy of $H_i$ sufficiently small and flat, the set $P^{\ast}$ has the following properties.

\begin{enumerate}
    \item If $\ell$ is the line generated by any two points in $H_i$, then $P^{\ast}\setminus H_i$ lies below $\ell$.

\item For each \(H_i=\{h_1,\ldots,h_{|H_i|}\}\), every line \(\ell\) passing through \(h_j\) and any other point of \(P^{\ast}\setminus H_i\) has the property that the points \(\{h_1,\ldots,h_{j-1}\}\) lie on one side of \(\ell\), while the points \(\{h_{j+1},\ldots,h_{|H_i|}\}\) lie on the other side.
\end{enumerate}

    \begin{figure}
        \centering
        \includegraphics[width=0.4\linewidth]{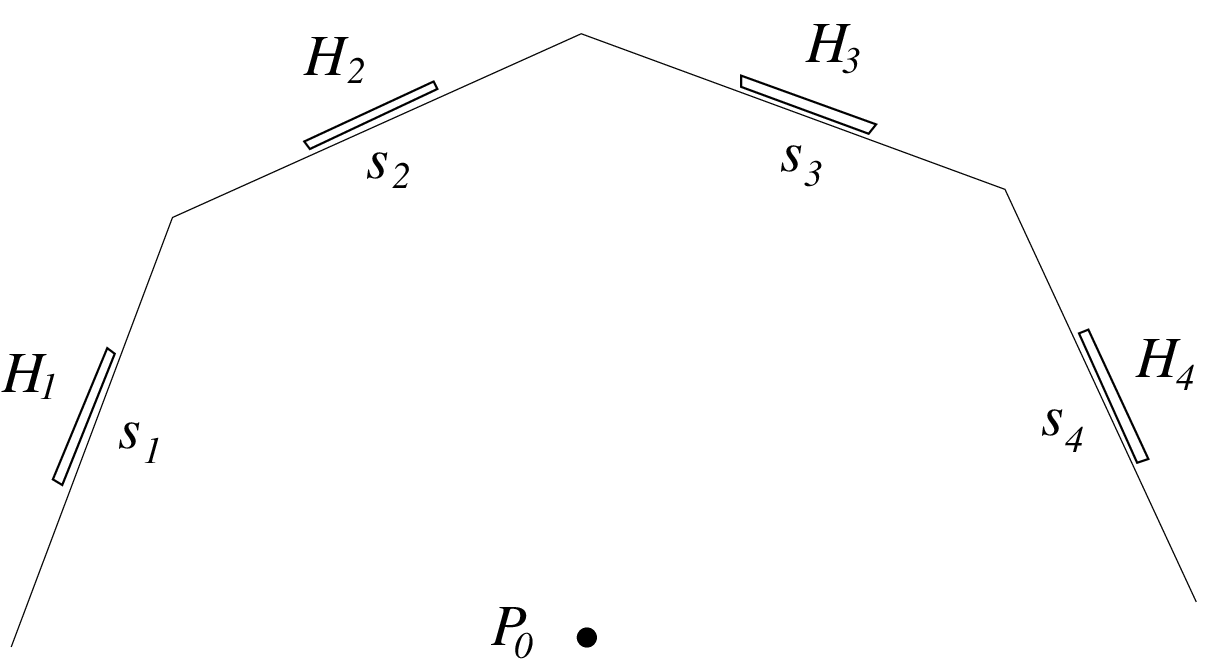}
        \caption{Example with $k = 13$, $m = 4$ and $|P_0| = 1$.}
        \label{fig:const}
    \end{figure}

\noindent See Figure \ref{fig:const}.   We now have the following simple observation.

\begin{observation}\label{clean}
    Let $X \subset H_i$ be a cap with respect to segment $s_i$ and $p \in P^{\ast}\setminus H_i$.  Then $X\cup p$ is a hole in $P^{\ast}$ if and only if $X$ is an open cap with respect to $s_i$.  
\end{observation}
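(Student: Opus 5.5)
We work in the rotated frame where $s_i$ is horizontal, so that $H_i$ is a Horton set, $X$ is a cap in the usual sense, and by Property~1 all of $P^{\ast}\setminus H_i$, in particular $p$, lies far below every line spanned by two points of $H_i$. Write $X=\{x_1,\ldots,x_r\}$ in increasing $x$-order. The plan is to first describe $\conv(X\cup p)$ explicitly, and then split its interior into a part that can only contain points of $H_i$ and a part that can only contain points outside $H_i$.

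\emph{Step 1 (convex position and shape).} Since $p$ lies below every line through two points of $X$, the set $X\cup p$ is in convex position. Its hull is the polygon whose boundary is the upper chain $x_1x_2\cdots x_r$ of the cap together with the two edges $px_1$ and $px_r$. Hence $\conv(X\cup p)=\conv(X)\cup T$, where $T$ is the triangle $px_1x_r$. The interior of the hull is covered by the interior of $\conv(X)$, the interior of $T$, and the relative interior of the segment $x_1x_r$ (when $r\ge 3$).

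\emph{Step 2 (points outside $H_i$ never obstruct).} Let $q\in P^{\ast}\setminus(H_i\cup p)$. By Property~1, $q$ lies below the line $x_1x_r$ and below every line spanned by $H_i$, so $q\notin\conv(X)$. It remains to exclude $q\in T$. For this I would use Property~2 applied to the line through $x_1$ and $p$ and the line through $x_r$ and $p$, and also use that the copies $H_j$ are small and flat and sit near the midpoints of the sides $s_j$ of the convex cap $C$. The triangle $T$ is a very thin sliver from $p$ to a tiny region around $s_i$. All other points of $P^{\ast}$ lie either near other sides of $C$, where they are separated from $T$ by the line supporting $s_i$ and by convexity of $C$, or near the origin on the $x$-axis, in which case the sliver aimed from $p$ misses them after a generic perturbation. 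This is the step I expect to be the main obstacle: the argument has to be phrased so that it follows from Properties~1 and~2 and the smallness of the $H_j$, rather than from a picture. My first approach is to note that $T$ lies inside the cone at $p$ spanned by $H_i$, and that by smallness this cone is arbitrarily narrow and contains no other point of $P^{\ast}$.

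\emph{Step 3 (points of $H_i$).} Let $h\in H_i\setminus X$. By Property~1, $p$ lies below all lines spanned by $H_i$, so from $h$'s viewpoint the triangle $T$ is essentially the vertical strip over the segment $x_1x_r$ extended downward. Concretely, if $h$ lies in $T$ or on the segment $x_1x_r$, then $h$ is below the upper chain of $X$ and has $x$-coordinate strictly between those of $x_1$ and $x_r$; Property~2 gives the $x$-ordering needed here. Conversely, if $h$ lies in $\conv(X)$, then $h$ is again below the upper chain within the $x$-range of $X$. So $h$ lies in the interior of $\conv(X\cup p)$ if and only if $h$ lies below the upper chain of $\conv(X)$, which is exactly the condition that $X$ fails to be open. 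A point with $x$-coordinate outside $[x(x_1),x(x_r)]$ is separated by Property~2, which says the lines $px_1$ and $px_r$ split $H_i$ according to the $x$-order. Combining Steps 2 and 3 gives: $X\cup p$ is a hole if and only if no point of $H_i$ lies below the upper chain of $X$, that is, if and only if $X$ is an open cap with respect to $s_i$.
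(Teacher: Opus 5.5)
Your proposal is correct and follows essentially the same route as the paper: points outside $H_i$ are excluded from $\conv(X\cup p)$ by the smallness and flatness of the clusters, and for $h\in H_i\setminus X$ Property~2 puts $h$ in the wedge at $p$ exactly when its index lies between those of $x_1$ and $x_r$, so that $h$ lies in the hull if and only if it lies below the upper chain. The paper simply asserts your Step~2 from smallness; if you write it out, drop the claim that the line supporting $s_i$ separates other clusters from $T$, since it does not. The actual reasons are that the chord from $p$'s cluster to $H_i$ meets $\partial C$ only at its endpoints, and that points in $p$'s own cluster are excluded because flatness makes their directions from $p$ transverse to the direction toward $H_i$.
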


\begin{proof}
Let $X=\{h_{i_1},\ldots,h_{i_r}\}\subset H_i$ be ordered with respect to their \(x\)-coordinate and let $p \in P^{\ast}\setminus H_i$.  Since each $H_j$ is sufficiently small,  every point of \(P^{\ast}\setminus (H_i\cup \{p\})\) lies outside \(\conv(X\cup\{p\})\), so any point of \(P^{\ast}\) contained in the interior of \(\conv(X\cup\{p\})\) must belong to \(H_i\setminus X\).

Suppose first that \(X\) is an open cap with respect to \(s_i\). Then no point of \(H_i\setminus X\) lies in \(\conv(X\cup\{p\})\). Indeed, let \(h_t\in H_i\setminus X\). If \(t<i_1\) or \(t>i_r\), then property~(2) implies that \(h_t\) does not lie between the lines generated by \(ph_{i_1}\) and \(ph_{i_r}\).  Hence, \(h_t\notin \conv(X\cup\{p\})\). If \(i_1<t<i_r\), then, since \(X\) is an open cap, the point \(h_t\) lies above the polygonal chain \(h_{i_1}h_{i_2}\cdots h_{i_r}\). Thus again \(h_t\notin \conv(X\cup\{p\})\). It follows that \(\conv(X\cup\{p\})\) contains no point of \(P^{\ast}\setminus (X\cup\{p\})\), and hence \(X\cup\{p\}\) is a hole in \(P^{\ast}\).

Conversely, suppose that \(X\cup\{p\}\) is a hole in \(P^{\ast}\). If \(X\) were not an open cap, then there would exist a point
\(h_t\in H_i\setminus X\) with \(i_1<t<i_r\) lying below the polygonal chain
\(h_{i_1}h_{i_2}\cdots h_{i_r}\).
Since \(i_1<t<i_r\), property~(2) implies that \(h_t\) lies between the two
lines \(ph_{i_1}\) and \(ph_{i_r}\).
Hence \(h_t\in \conv(X\cup\{p\})\), a contradiction.\end{proof}

By Observation \ref{clean}, we can select an open 3-cap $X_i$ in each $H_i$, and together with $P_0$, to create a $k$-hole in $P^{\ast}$.  Indeed, by property (1), (2), $X_1\cup \cdots \cup X_m$ forms a cap, and together with $P_0$, we have $k$ points in convex position.  By Observation \ref{clean} and property (2), $X_1\cup \cdots \cup X_m \cup P_0$ is a hole.  Since each $H_i$ has $\Omega(n/m)$ open 3-caps by Lemma \ref{lem:3cap}, we obtain at least $\Omega((n/m)^{\lfloor k/3\rfloor})$ holes of size $k$ in $P^{\ast}$.  Since $ m = \lfloor k/3\rfloor$, there is an absolute constant $c_1 > 0$ such that   

$$h_k(n) \geq \left(\frac{c_1}{k}\right)^{\lfloor k/3\rfloor }n^{\lfloor k/3\rfloor}.$$

Moreover, \(P^{\ast}\) does not contain a \((k+1)\)-hole.  Indeed, suppose
for contradiction that \(X\subset P^{\ast}\) is a \((k+1)\)-hole.  Since each
\(H_i\) contains no \(7\)-hole, the set \(X\) is not contained in any single
\(H_i\).  Moreover, if \(|X\cap H_i|\le 3\) for every \(i\), then
\[
        |X|\le 3m+|P_0|=k,
\]
a contradiction.  Hence, for some \(i\), we have \(|X\cap H_i|\ge 4\) and
\(X\cap(P^{\ast}\setminus H_i)\neq\emptyset\).  Thus \(X\cap H_i\) must form
a cap with respect to \(s_i\).  Since \(H_i\) is lower \(4\)-closed with
respect to \(s_i\), Observation~\ref{clean} implies that \(X\) is not a
\((k+1)\)-hole, a contradiction.

\medskip

Next, we prove the upper bound for \(h_k(n)\). Choose \(c_2>0\) sufficiently large so that \((c_2/2e)^3>2\).  For the sake of contradiction, suppose \(P\) is a point set of size \(n'\le n\) in the plane with no \((k+1)\)-hole, but with at least \(\left(c_2/k\right)^{\lceil k/2\rceil} n^{\lceil k/2\rceil}\) \(k\)-holes, where \(c_2\) is a sufficiently large constant. Since \(n'\le n\), the set \(P\) has at least \(\left(c_2/k\right)^{\lceil k/2\rceil}(n')^{\lceil k/2\rceil}\) \(k\)-holes. After rotation, we may assume that no two points from $P$ share the same $x$-coordinate. 

Let $X = \{x_1,\ldots, x_k\}\subset P$ be a convex $k$-gon in $P$ such that $x_1$ is the left-most point in $X$, and the elements in $X$ are ordered in clockwise order.  We say that $Y\subset X$ is a \emph{support} of $X$ if $Y  = \{x_1,x_3,\ldots, x_{k-1}\}$ when $k$ is even, and $Y = \{x_1,x_3,\ldots, x_k\}$ when $k$ is odd. Hence, each $k$-hole $X$ in $P$ corresponds to a unique support $Y\subset X$.  Since $|Y| = \lceil k/2\rceil$, and since \(P\) contains at least \(\left(c_2/k\right)^{\lceil k/2\rceil}(n')^{\lceil k/2\rceil}\) \(k\)-holes, there must be a subset $Y\subset P$ of size $\lceil k/2\rceil$ such that $Y$ is the support for at least

\begin{align*}
\frac{\left(c_2/k\right)^{\lceil k/2\rceil} (n')^{\lceil k/2\rceil}}
{\binom{n'}{\lceil k/2\rceil}}
&\ge
\left(\frac{c_2}{k}\right)^{\lceil k/2\rceil}\lceil k/2\rceil!\, 
\\
&>
\left(\frac{c_2}{k}\right)^{\lceil k/2\rceil}
\left(\frac{\lceil k/2\rceil}{e}\right)^{\lceil k/2\rceil}  \\
&=
\left(\frac{c_2\lceil k/2\rceil}{ek}\right)^{\lceil k/2\rceil}  \\
&\ge
\left(\frac{c_2}{2e}\right)^{\lceil k/2\rceil} 
>2,
\end{align*}

\noindent $k$-holes in $P$.  
Let $X,X' \subset P$ be two distinct $k$-holes in $P$ that use $Y$ as a support.  Hence, we have $X = \{x_1,\ldots, x_k\}$, $X' = \{x'_1,\ldots, x'_k\}$, $x_1 = x'_1$, $x_3 = x'_3$, etc. Just as above, $x_1$ is the left-most point in $X$, and the elements in $X$ are labeled in clockwise order.  Likewise, \(x'_1=x_1\) is the left-most point of \(X'\), and the elements of \(X'\) are labeled in clockwise order. We are going to finish the proof using the following observation.

\begin{observation}\label{diff}

Each $Y\subseteq P$ with $|Y|=\lceil k/2\rceil$ supports at most one $k$-hole $X\subseteq P$.  
    
\end{observation}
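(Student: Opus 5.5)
The plan is to assume that $X\neq X'$ are two $k$-holes with the same support $Y$, and to build from them a $(k+1)$-hole, which $P$ does not contain. Since the support contains every other vertex, $X$ and $X'$ can differ only in the "gap" vertices $x_{2j}$. Each $x_{2j}$ lies between the consecutive support vertices $x_{2j-1}$ and $x_{2j+1}$. When $k$ is even there is also a final gap $x_k$ between $x_{k-1}$ and $x_1$.

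Pick an index $j$ with $x_{2j}\neq x'_{2j}$. Let $a=x_{2j-1}$ and $b=x_{2j+1}$, with indices read cyclically. Both $x_{2j}$ and $x'_{2j}$ lie on the outer side of the line $ab$, relative to $\conv(Y)$, because each of $X$ and $X'$ is in convex position with $Y$ among its vertices. Consider the triangles $T=\conv\{a,x_{2j},b\}$ and $T'=\conv\{a,x'_{2j},b\}$.

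The key step is to show that $x'_{2j}\notin T$ and $x_{2j}\notin T'$. First, $x'_{2j}$ cannot lie in the interior of $T$: $T$ lies inside $\conv(X)$, and $X$ is a hole. Second, by general position $x'_{2j}$ does not lie on the boundary of $T$. The same argument gives $x_{2j}\notin T'$.

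Now look at the two points $u=x_{2j}$ and $v=x'_{2j}$, both on the same side of $ab$ and neither inside the other's triangle. I claim that $a,u,v,b$ are in convex position. The positions of $u$ and $v$ are constrained by the lines $au$ and $bu$; since $v\notin T$ and $u\notin T'$, the segments $av$ and $bu$ must cross, which means the quadrilateral $a,u,v,b$ (in the appropriate order) is convex. Put $u,v$ in the order compatible with the clockwise order from $a$ to $b$.

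Next, form $Z=X\cup\{x'_{2j}\}$, which has $k+1$ points. Convex position of $Z$ reduces to checking the local angles around the inserted point. The point $x'_{2j}$ lies in the region outside $\conv(X)$ bounded by the lines $x_{2j-2}x_{2j-1}$ and $x_{2j+1}x_{2j+2}$, because $X'$ is convex and contains these same support vertices $x_{2j\pm1}$ with their adjacent gap points. To make this clean, I would choose the index $j$ with some care. If necessary, I would instead take $Z$ to be the hybrid set that uses $X$'s gap vertices on one side and $X'$'s on the other, and switch gaps one at a time, so that the relevant neighboring vertices agree.

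Emptiness should then follow: $\conv(Z)=\conv(X)\cup\conv\{x_{2j-1},x_{2j},x'_{2j},x_{2j+1}\}$, and the quadrilateral piece is covered by $T\cup T'$. Here $T\subset\conv(X)$ and $T'\subset\conv(X')$, and both of these are empty, so $Z$ is a $(k+1)$-hole, a contradiction.

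I expect the main obstacle to be the convexity step at the neighbors $x_{2j-1}$ and $x_{2j+1}$. Adding $x'_{2j}$ to $X$ must not make those vertices reflex. This uses that in $X'$ the angles at $x_{2j\pm1}$ are convex with the neighbors $x'_{2j\pm2}$, which may differ from $x_{2j\pm2}$. My first attempt is the hybrid-set argument: take $W$ to be $Y$ together with the chosen gap vertex from either $X$ or $X'$ in each gap, chosen to be the "outer" one. Then show that $W$ is a convex $k$-gon whose hull is contained in $\conv(X)\cup\conv(X')$, hence empty, and add the remaining inner option in one gap.

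The convexity of this hybrid set is the main thing to verify. The tool is that each gap point lies in the wedge cut out by the extensions of the adjacent support edges of $\conv(Y)$, since it is convex with its two support neighbors. The leftmost point $x_1$ is the same in all these sets, which fixes the labeling consistently.
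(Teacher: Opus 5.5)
Your proposal has a genuine gap: the construction of $Z$ fails on both convexity and emptiness, and it misses the propagation idea together with the use of the leftmost point.

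\textbf{The local step.} You justify that $x'_{2j}$ lies between the lines $x_{2j-2}x_{2j-1}$ and $x_{2j+1}x_{2j+2}$ ``because $X'$ contains these support vertices with their adjacent gap points''. That justification is invalid: in $X'$ the neighbours of $x_{2j\pm1}$ are $x'_{2j\pm2}$, which may differ from $x_{2j\pm2}$.

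In fact the hypothesis forces the opposite. Suppose some $p\in P$ lay in the region beyond an edge $e$ of $X$ cut out by the lines of the two adjacent edges. Then the point of $P$ in that region nearest to the line of $e$ would extend $X$ to a $(k+1)$-hole. Hence $X\cup\{p\}$ is never in convex position. Concretely, if the order is $a,u,v,b$, then $v=x'_{2j}$ lies strictly outside the line $x_{2j+1}x_{2j+2}$, and $Z$ is not convex. No choice of $j$ avoids this when the differing gaps form runs, in particular when all gaps differ.

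\textbf{Emptiness.} Even a convex $Z$ need not be empty. $T\cup T'$ does not cover $\conv\{a,u,v,b\}$: it misses the triangle cut off by $uv$ and the diagonals $av,ub$. A point of $P$ there lies in neither hole. The nearest-point choice above repairs this, but your argument does not contain it.

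\textbf{The hybrid set.} This does not help either. ``Outer'' is undefined, because $a,u,v,b$ are in convex position, so neither triangle contains the other. Moreover, switching between $X$ and $X'$ at consecutive gaps creates a reflex vertex exactly in the forced configuration just described.

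\textbf{The missing ingredient.} You use the leftmost normalization only to align labels, and you never use the parity of $k$. For even $k$ the statement is false without them. Here is an example:
\begin{itemize}
\item let $Y=\{y_1,y_2,y_3\}$ be an equilateral triangle, labelled clockwise;
\item let $X$ add a point $u_j$ just outside the midpoint of each side $y_jy_{j+1}$;
\item let $X'$ add a point $v_j$ very close to $y_{j+1}$, in the thin wedge between the extensions beyond $y_{j+1}$ of $y_{j+2}y_{j+1}$ and $u_{j+1}y_{j+1}$.
\end{itemize}
Then $X$ and $X'$ are $6$-holes alternating on $Y$, and these nine points contain no $7$ points in convex position. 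The only thing that fails is that the leftmost vertex of $X'$ is some $v_j\notin Y$. So an argument that never uses that $x_1$ is leftmost in $X'$ cannot work.

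\textbf{The missing idea is propagation,} which the paper implements with the regions $S_i,R_i$. The forced position of $v$ outside the line $x_{2j+1}x_{2j+2}$, together with convexity of $X'$ at $x_{2j+1}$, gives $x'_{2j+2}\neq x_{2j+2}$, with the same order in the next gap. Following this around the polygon:
\begin{itemize}
\item for odd $k$ you reach the consecutive support pair $x_k,x_1$, and convexity at $x_k$ fails;
\item for even $k$ every gap differs. Then $x'_k$ lies in the reflection through $x_1$ of the cone spanned by $x_1x_2$ and $x_1x_3$, hence strictly to the left of $x_1$ (or $x_k$ does, with $X,X'$ swapped). This contradicts the choice of $x_1$.
\end{itemize}
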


\begin{proof}

    \begin{figure}
        \centering
        \includegraphics[width=0.8\linewidth]{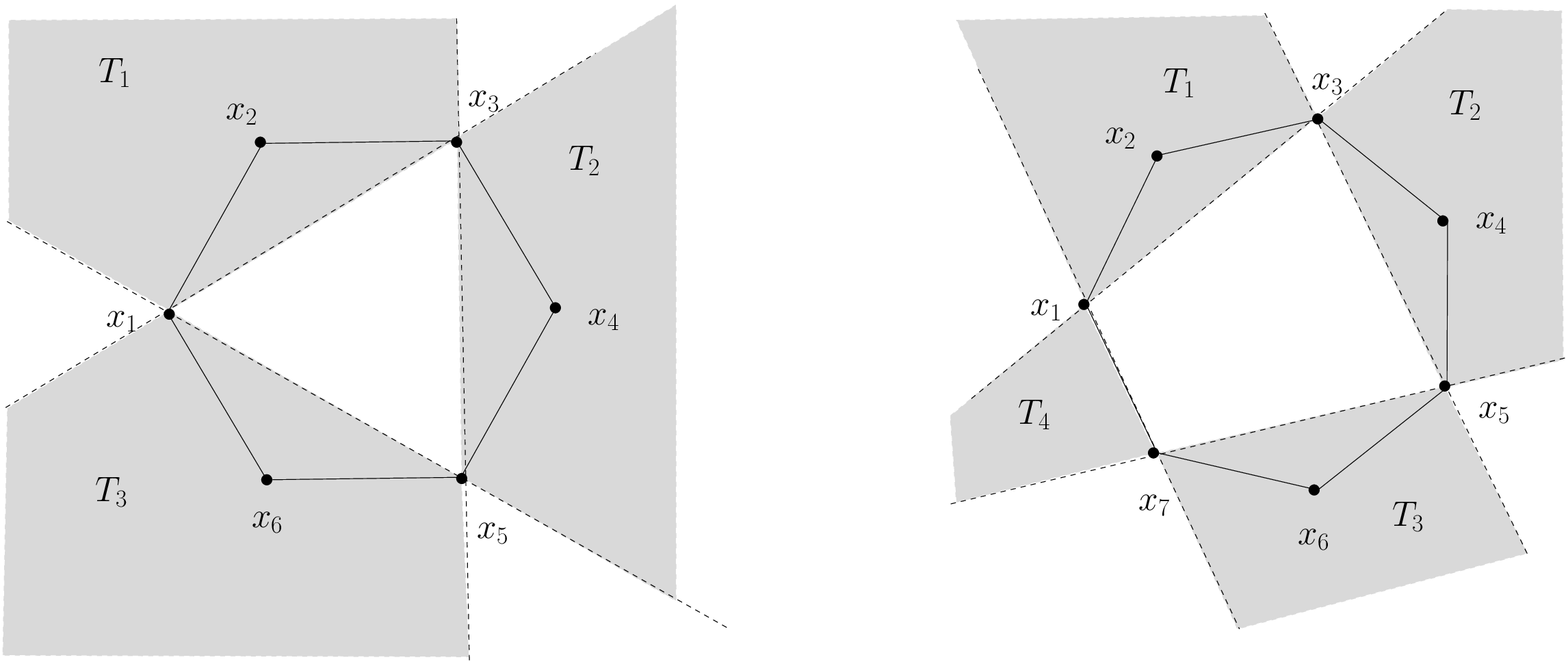}
        \caption{Regions $T_i$ shaded in gray for $k = 6$, and $k = 7$.}
        \label{exconv}
    \end{figure}

Suppose $Y$ supports two distinct $k$-holes $X$ and $X'$ as above. We define regions \[T_1,\ldots,T_{\lfloor (k+1)/2\rfloor}\] outside of \(\conv(Y)\) as follows. If \(k\) is even, then for each \(1\le i\le k/2\), let \(T_i\) be the region outside of \(\conv(Y)\) bounded by the segment \(x_{2i-1}x_{2i+1}\) and by the two lines \(x_{2i-3}x_{2i-1}\) and \(x_{2i+1}x_{2i+3}\), where indices are taken modulo \(k\).  Now suppose that \(k\) is odd. For each \(1\le i\le (k-1)/2\), let \(T_i\) be the region outside of \(\conv(Y)\) bounded by the segment \(x_{2i-1}x_{2i+1}\) and by the two lines containing the adjacent sides of \(\conv(Y)\), and let \(T_{(k+1)/2}\) be the region outside of \(\conv(Y)\) bounded by the segment \(x_1x_k\) and by the two lines \(x_{k-2}x_k\) and \(x_1x_3\).  See Figure \ref{exconv}.

  First, let us consider the case when $k$ is even.  Suppose $X,X'$ share some point not in $Y$.  Let $i\leq k/2$ be the maximum integer such that $x_{2i} \neq x'_{2i}$ and $x_{2i + 2} = x'_{2i+2}$, where $x_{k + 1} = x_1$ and $x_{k + 2} = x_2$.  We define $S_i$ to be the region inside of $T_i$, bounded by the lines $x_{2i-3}x_{2i-1}$, $x_{2i-2}x_{2i-1}$, and $x_{2i}x_{2i+1}$.   Likewise, we define $R_i$ to be the region inside of $T_i$, bounded by the lines $x_{2i-1}x_{2i}$, $x_{2i+1}x_{2i+2}$, and $x_{2i + 1}x_{2i+3}$.  See Figure \ref{fig:es1}.

We start in region $T_i$.  Since both $X$ and $X'$ are $k$-holes, and since $P$ does not contain a $(k + 1)$-hole, this implies that $x'_{2i}$ must lie in $S_i$ or $R_i$: otherwise, we must have that $X\cup \{x'_{2i}\}$ is a $(k+1)$-hole, or $\conv(\{x'_{2i - 1},x'_{2i},x'_{2i + 2}\})$/$\conv(\{x_{2i - 1},x_{2i},x_{2i + 2}\})$ is not empty.  See Figure \ref{fig:es1}.  If $x'_{2i} \in R_i$, then $x'_{2i + 1} \in \conv(\{x'_{2i - 1},x'_{2i},x'_{2i + 2}\})$ since $x_{2i + 2} = x'_{2i+2}$.  This contradicts the fact that $X'$ is a $k$-hole.  Therefore, we must have $x'_{2i} \in S_i$ and we proceed to region $T_{i-1}$.  

We apply the same argument inside region $T_{i-1}$.  Again, we must have $x'_{2i - 2} \neq x_{2i - 2}$ and $x'_{2i - 2}  \notin R_{i-1}$, since otherwise $x'_{2i - 1} \in \conv(\{x'_{2i - 3},x'_{2i-2},x'_{2i}\})$ and $X'$ would not be a $k$-hole.  Hence, we must have $x'_{2i-2} \in S_{i-1}$ and we proceed to region $T_{i-2}$.  Iterating this argument in counterclockwise order over all indices forces $x_{2i+2} \neq x'_{2i+2}$, contradicting the choice of $i$. Hence, we must have that $x'_{2i}\in S_i$ or $x'_{2i}\in R_i$ for every $i$. However, then $x'_2$ or $x'_{k}$ will be the left-most vertex of $X'$, which contradicts the fact that $x_1$ is the left-most vertex of $X'$. 

\medskip

Now suppose that $k$ is odd, which implies that the interior of region $T_{\lceil k/2\rceil}$ does not contain a point from $X$ and $X'$. Since both $X$ and $X'$ are $k$-holes, and since $P$ does not contain a $(k + 1)$-hole, this implies that $x_{k-1}=x'_{k-1}$ or $x'_{k-1} \in S_{\lceil k/2\rceil-1}$. As $X\neq X'$, let $i\leq (k-1)/2$ be maximal such that $x_{2i} \neq x'_{2i}$.

We start in region $T_i$, and argue just as above.  Since both $X$ and $X'$ are $k$-holes, and since $P$ does not contain a $(k + 1)$-hole, this implies that $x'_{2i}$ must lie in $S_i$ or $R_i$.  However, since we are assuming $i$ is maximal such that $x_{2i} \neq x'_{2i}$, this implies that $x'_{2i} \notin R_i$ since otherwise $x'_{2i + 1} \in \conv(\{x'_{2i - 1},x'_{2i},x'_{2i + 2}\})$ and $X'$ would not be a $k$-hole.  Therefore, $x'_{2i} \in S_i$ and we proceed to region $T_{i - 1}$.

    \begin{figure}
        \centering
        \includegraphics[width=0.7\linewidth]{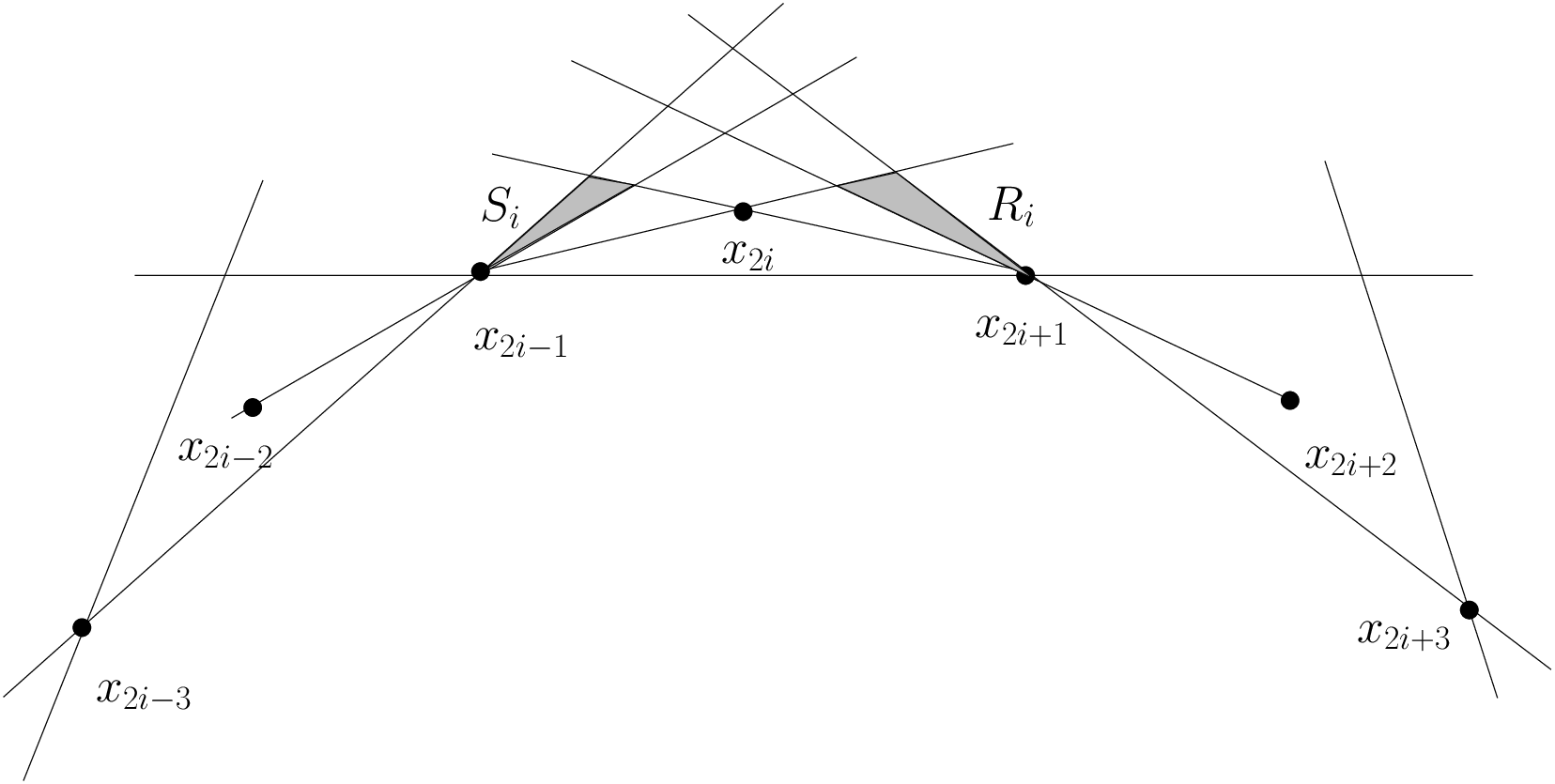}
        \caption{Regions $S_i$ and $R_i$ shaded in gray.}
        \label{fig:es1}
    \end{figure}

If $i - 1 \geq 2$, then again, we must have $x'_{2i - 2} \neq x_{2i - 2}$ since otherwise $$x'_{2i - 1} \in \conv(\{x'_{2i - 3},x'_{2i-2},x'_{2i}\})$$ and $X'$ would not be a $k$-hole.  By the same arguments as above, this implies that $x'_{2i - 2}$ must lie in $S_{i - 1}$ and we proceed to region $T_{i - 2}$.  Iterating this argument cyclically in counterclockwise order leads us to region $T_1$ with $x_2 \neq x'_2$ and $x'_2 \not\in R_1$ and $S_1  = \emptyset$.  Since both $X$ and $X'$ are $k$-holes, and neither uses a point in $T_{\lceil k/2\rceil}$, $X\cup \{x'_2\}$ is a $(k + 1)$-hole.  Contradiction.\end{proof}

This completes the proof.\end{proof}

\section{Concluding remarks}

For each fixed integer \(k\ge 6\), let \(h'_k(n)\) be the maximum number of
\(k\)-holes determined by an \(n\)-element point set in general position with
no \((k+1)\)-hole.  Thus \(h_k(n)=\max_{m\le n} h'_k(m)\). Let $h''_k(n)$ be the maximum number of $k$-holes determined by an $n$-element point set in general position with no $(k+1)$-hole or open $4$-cup. Thus, $h''_k(n) \leq h'_k(n)$.

Then the proof of Theorem~\ref{hole1} gives the same bounds for \(h'_k(n)\) and $h''_k(n)$.

\begin{theorem}
There are absolute constants $c_1,c_2>0$ such that the following holds. For
every integer \(n>k\ge 6\),
\[
\left(\frac{c_1}{k}\right)^{\lfloor k/3\rfloor} n^{\lfloor k/3\rfloor}
\le h''_k(n) \le h'_k(n)\le h_k(n) \le
\left(\frac{c_2}{k}\right)^{\lceil k/2\rceil} n^{\lceil k/2\rceil}.
\]
\end{theorem}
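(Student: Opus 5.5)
The chain of inequalities splits into three parts: two trivial middle comparisons, the upper bound on $h_k(n)$, and a lower bound on $h''_k(n)$. For the middle, $h''_k(n)\le h'_k(n)$ holds because the family defining $h''_k$ is a subfamily of the one defining $h'_k$. Also $h'_k(n)\le h_k(n)$, since an $n$-element set is in particular of size at most $n$; equivalently, $h_k(n)=\max_{m\le n}h'_k(m)$. The upper bound on $h_k(n)$ is exactly the upper bound of Theorem~\ref{hole1}, with the constant $c_2$ left unchanged. So the only real work is the lower bound for $h''_k(n)$, where the set must have exactly $n$ points, no $(k+1)$-hole, and additionally no open $4$-cup.

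For the lower bound I will reuse the construction $P^{\ast}=P_0\cup H_1\cup\cdots\cup H_m$ from the proof of Theorem~\ref{hole1}. It has exactly $n$ points, since $\sum|H_i|=n-(k\bmod 3)$ and $|P_0|=k\bmod 3$. That proof already shows two things: $P^{\ast}$ has no $(k+1)$-hole, and it has at least $\left(c_1/k\right)^{\lfloor k/3\rfloor}n^{\lfloor k/3\rfloor}$ $k$-holes. It remains to check that $P^{\ast}$ has no open $4$-cup, in the fixed coordinate frame where ``above'' is the positive $y$-direction.

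Let $X$ be a $4$-cup in $P^{\ast}$; I will show it is not open. I split into cases according to how $X$ meets the pieces.

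\emph{Case 1: $X\subset H_i$ for some $i$.} Each $H_i$ is an affine image of a flat Horton set placed along $s_i$, and $s_i$ has slope strictly between the slopes of the neighbouring sides. Because $H_i$ is very flat, a cup of $H_i$ in the vertical direction is a cup with respect to $s_i$, up to an arbitrarily small rotation. One must check that the small-and-flat choice makes ``open'' transfer between the two frames as well. Granting that, upper $4$-closedness of $H_i$ with respect to $s_i$ (the Lemma of \cite{H}) gives a point of $H_i$ above the cup.

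\emph{Case 2: $X$ meets two or more pieces, or meets $P_0$.} Property~(1) forces every $H_j$ to look like a locally cap-like cluster. Hence a cup can use at most two points from each of its extreme clusters and essentially one point from each middle cluster. I then look for a witness above $X$. Since all of $P$ lies along the upper arc of $C$, some point of a cluster $H_j$ whose side lies between the extreme points of $X$, or a nearby point of the same cluster, lies above the lower chain. If no intermediate cluster exists, the cup uses two adjacent clusters, and I would take a point of those clusters above the connecting edge.

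The main obstacle is making Case~2 rigorous, in particular cups with two points in each of two adjacent clusters. If an open $4$-cup does survive there, my fallback is to modify the construction: add a tiny Horton cluster, or one extra point, high above the middle so that it lies above every cup spanning more than one cluster, and check that this creates no new $(k+1)$-hole and destroys no counted $k$-holes beyond a constant factor.
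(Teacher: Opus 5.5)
\textbf{There is a genuine gap in Case~2.} Your reduction is the paper's route: the middle inequalities are trivial, the upper bound is unchanged, and $P^\ast$ (exactly $n$ points) is reused. The paper's entire justification is that the proof of Theorem~\ref{hole1} carries over, so everything rests on $P^\ast$ having no open $4$-cup. Case~1 is fine: flatness makes the vertical frame and the $s_i$-frame agree on the order of points and on which side of a chord is ``above''. Case~2, however, is not proved, the heuristic behind it is wrong, and its conclusion is false for some $k$.

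The cluster centres lie on a strictly concave arc, the $s_j$ are tangent to the circle through them, and all slopes inside $H_j$ are close to the slope of $s_j$. Hence any three points of $P=H_1\cup\cdots\cup H_m$ not all in one $H_j$ form a cap. So cups with two points in each of two clusters do not exist, and every $4$-cup not inside one $H_j$ must use $P_0$.

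One point $p\in P_0$ does not suffice. It is lower than all of $P$, so the cup edge leaving $p$ climbs. The next cup edge joins two points of $P$ on the same side of the origin and would have to climb even more steeply, which never happens. So such a cup has the form $\{a,p,p',b\}$ with $P_0=\{p,p'\}$, $a,b\in P$ and $x(a)<x(p)<x(p')<x(b)$.

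These cups do occur. Take $k\equiv 2\pmod 3$ and $m$ even (e.g.\ $k=8$, $m=2$). Let $a$ be the rightmost point of $H_{m/2}$ and $b$ the leftmost point of $H_{m/2+1}$. The slopes along $a,p,p',b$ are negative, about $0$, and positive. No point of $P^\ast$ other than $p,p'$ has $x$-coordinate between $x(a)$ and $x(b)$, so this is an open $4$-cup. Thus $P^\ast$ as built, with $P_0$ near the origin, does not witness $h''_k(n)$ when $k\equiv 2\pmod 6$. No argument of the form ``some cluster point lies above the chain'' can close Case~2 there.

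\textbf{Your fallback does not repair this.} Place a point $z$ high above the middle. Any open $3$-cup of a cluster $H_j$ to the left of $z$, followed by $z$, is then an open $4$-cup: the last edge is nearly vertical, so nothing lies above the chain. The leftmost point of a high Horton cluster behaves the same way. The fallback also changes $|P^\ast|$.

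\textbf{A repair that works is to move $P_0$.} Put $p=(-x_0,0)$ and $p'=(x_0,0)$ with $\cos^2(\pi/(2m))<x_0<1$. Then $p,p'$ lie left and right of all clusters but inside the base of $C$.
\begin{itemize}
\item The line of every non-horizontal side $s_j$ meets the $x$-axis at $|x|\ge 1$. So $p$ lies strictly to the right of the intercept of each left-half side line, which gives $\mathrm{slope}(pc)>\mathrm{slope}(s_j)$ for all $c\in H_j$. This is trivial when $s_j$ has slope $\le 0$.
\item Taking $j=1$ (and symmetrically $j=m$ for $p'$), $p,X_1,\dots,X_m,p'$ is still a cap. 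It is empty by the same argument as before (Observation~\ref{clean} and property~(2)).
\item Properties (1) and (2) and the no-$(k+1)$-hole argument are unchanged.
\item There is no $4$-cup through $p$ or $p'$. A cup containing both would start with a positive slope and end with a negative one. A cup containing only $p$ must start at $p$, the leftmost point, followed by a $3$-cup of $P$. That $3$-cup lies in a single $H_j$, so the cup would need $\mathrm{slope}(pc)<\mathrm{slope}(s_j)$, which fails.
\end{itemize}
For $|P_0|\le 1$ the original placement already works, by the slope facts above.
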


Unlike \(h_k(n)\), the functions \(h'_k(n),h''_k(n)\) are
not monotone by definition. We do not know whether monotonicity holds.

\begin{conjecture}
For all integers \(n>k\ge 6\), we have 
\[
h'_k(n)\le h'_k(n+1)
\qquad and
\qquad
h''_k(n)\le h''_k(n+1). 
\]
\end{conjecture}

Note that the following weaker monotonicity holds for $h''_k(n)$, which follows by placing a Horton set of size $N-n \geq n-1$ highly above the original point set, with the property that at least one point from the Horton set has its $x$-coordinate lying between the $x$-coordinates of any pair from the original point set. 

\begin{proposition}
For all integers $n>k\geq 6$, if $N\geq 2n-1$, then 
\[
h''_k(n)\le h''_k(N).
\]
\end{proposition}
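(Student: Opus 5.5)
We take an $n$-point set $P$ in general position that has no $(k+1)$-hole, no open $4$-cup, and exactly $h''_k(n)$ $k$-holes. We then add a Horton set $H$ of size $N-n$ placed high above $P$, and show two things. First, no new $(k+1)$-hole and no new open $4$-cup is created. Second, every $k$-hole of $P$ is still a $k$-hole of $P\cup H$. This gives $h''_k(N)\ge h''_k(n)$.

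\emph{Construction.} Rotate so that $P$ is in strongly general position, and write $p_1,\dots,p_n$ for its points in increasing $x$-order. Since $N-n\ge n-1$, we take a Horton set $H$ of size $N-n$ whose points are ordered by $x$-coordinate. At least one point of $H$ is placed with $x$-coordinate strictly between those of $p_j$ and $p_{j+1}$, for each $j$. The remaining points of $H$ go to the right of $p_n$. We shift $H$ far upward so that $H$ lies high above $P$. We also flatten $H$ vertically relative to its height, so that every line through two points of $P$ has all of $H$ on one side, above it.

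\emph{Holes of $P$ survive.} Let $X$ be a $k$-hole of $P$. Then $\conv(X)$ lies below every point of $H$, because $H$ is above all lines spanned by $P$ and $\conv(X)$ is bounded by such lines. Hence $\conv(X)$ contains no point of $H$, and $X$ is still a hole.

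\emph{No open 4-cup is created.} Take a 4-cup $C$ in $P\cup H$; we check three cases.
\begin{itemize}
\item If $C\subset H$, it is not open in $H$, because Horton sets are upper $4$-closed. It is therefore not open in the union.
\item If $C\subset P$, it was not open in $P$ by hypothesis, and adding points cannot make it open.
\item If $C$ is mixed, its points from $P$ must come before its points from $H$ along the cup, since a cup cannot go up steeply and then come back down into $P$. Concretely, $C$ has a consecutive pair $p\in P$, $h\in H$ with $p$ to the left of $h$. The cup also has a vertex to the left of $p$, or else $C$ has at least two points of $P$. Two consecutive points $p_a,p_b$ of $C$ in $P$ have some point of $H$ at an intermediate $x$-coordinate, and that point lies high above the segment $p_ap_b$. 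So the cup is not open.
\end{itemize}
The one remaining subcase is a cup with exactly one point of $P$ followed by three points of $H$. This is the main obstacle. There I would use the interleaving point of $H$ placed between $p_j$ and $p_{j+1}$, together with the first-coordinate ordering, to show that some point of $H$ lies above the chain. Failing that, I would refine the placement so that every point of $P$ has a point of $H$ immediately to its right.

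\emph{No $(k+1)$-hole is created.} Let $Y$ be a $(k+1)$-hole of $P\cup H$.
\begin{itemize}
\item If $Y\subset P$, it is a hole of $P$, which is impossible.
\item If $Y\cap H\neq\emptyset$ and $|Y\cap P|\ge2$, the part $Y\cap P$ forms the lower chain and $Y\cap H$ is high above it. Two $x$-consecutive points of $Y\cap P$ then have an interleaving point of $H$ between them at an intermediate height. That point falls inside $\conv(Y)$ unless it belongs to $Y$, and iterating this forces $Y$ to pick up $H$-points in a way that contradicts the Horton structure.
\item Otherwise $|Y\cap P|\le 1$, so $|Y\cap H|\ge k\ge6$. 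One checks, via the high-above property, that $Y\cap H$ is a hole of $H$ of size at least $6$ containing a long cup or cap. This contradicts Horton's lemma when combined with the $4$-closedness.
\end{itemize}
I expect the delicate part throughout to be managing the mixed configurations using the interleaving points.
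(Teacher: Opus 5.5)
Your construction is the one the paper sketches: a Horton set of size $N-n\ge n-1$ placed high above $P$, with an $H$-point in every gap between $x$-consecutive points of $P$. (To realize the interleaving, take an $x$-subsequence of a horizontally stretched large Horton set, which is again Horton, then flatten and lift it.) Your argument that the $k$-holes of $P$ survive is fine.

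In the open-$4$-cup step, the subcase you leave open cannot occur. Since $P$ lies below every line through two points of $H$, and $H$ lies above every line through two points of $P$, every $x$-ordered triple of type $(p,h,h')$, $(h,h',p)$ or $(p,h,p')$ is a cap. Symmetrically, $(h,p,p')$, $(p,p',h)$ and $(h,p,h')$ are cups. So a mixed $4$-cup has type $HPPP$, $PPPH$ or $HPPH$. Your claim that the $P$-points come first is therefore false, but harmless: such a cup always has two consecutive vertices in $P$, and the interleaving $H$-point above that edge shows it is not open.

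The genuine gap is the mixed case of the no-$(k+1)$-hole step. The interleaving point between two consecutive vertices of $Y\cap P$ is not at an intermediate height. It lies at the level of $H$, where $\conv(Y)$ is only the region under the cap $Y\cap H$, so it is typically outside $\conv(Y)$ (e.g.\ when $Y=X\cup\{h\}$ with $X\subset P$ a cup). More importantly, your argument never uses the hypothesis that $P$ has no open $4$-cup, and without it the claim is false. If $X\subset P$ is an open $k$-cup, then $X\cup\{h\}$ is a $(k+1)$-hole of $P\cup H$ for every $h\in H$.

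The missing idea is as follows. By the triple types above, in a mixed convex polygon $Y$ the points of $Y\cap P$ are consecutive on the lower chain and form a cup. The points of $Y\cap H$ are consecutive on the upper chain and form a cap. Every point of $P$ above this cup within its $x$-range, and every point of $H$ below this cap within its $x$-range, lies in the interior of $\conv(Y)$; checking the steep mixed edges with the same triple types shows this. Hence if $Y$ is a hole, $Y\cap P$ is an open cup in $P$ and $Y\cap H$ is an open cap in $H$. Since $|Y|=k+1\ge 7$, one of them has at least $4$ points. This contradicts either the hypothesis on $P$ or the lower $4$-closedness of $H$.

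The same argument replaces your third bullet, where a $6$-hole of $H$ by itself gives no contradiction, since Horton sets do contain $6$-holes. The cases $Y\subset P$ and $Y\subset H$ (no $7$-hole) are as you say.
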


\end{document}